 \newtheorem{claim*}{Claim}
\theoremstyle{plain}
\newtheorem{theorem}{Theorem}
\newtheorem{corollary}[theorem]{Corollary}
\newtheorem{lemma}[theorem]{Lemma}
\newtheorem*{theoremnn}{Theorem}
\newtheorem{proposition.definition}[theorem]{Proposition/Definition}
\theoremstyle{definition}
\newtheorem{definition}[theorem]{Definition}
\newtheorem{remark}[theorem]{Remark}
\newtheorem{conjecture}[theorem]{Conjecture}
\newcommand{\im}{\operatorname{im}}
\newcommand{\cO}{{\mathcal O}}
\newcommand{\Sym}{\operatorname{Sym}} 
\newcommand{\lra}{\longrightarrow}
\newcommand{\noi}{\noindent}
\newcommand{\PP}{\mathbf{P}}
\newcommand{\ZZ}{\mathbf{Z}}
\newcommand{\OO}{\mathcal{O}}
\newcommand{\bull}{_{\bullet}}
\newcommand{\HH}[3]{H^{{#1}} \big( {#2} , {#3}
\big) }
\newcommand{\pr}{\prime}
\newcommand{\ol}[1]{\overline{#1}}
\newcommand{\bfkk}{\mathbf{k}}
\numberwithin{theorem}{section}
\newcommand{\Sdb}{S(b)^{(d)}}
\newcommand{\Mdb}{M(b)^{(d)}}
\begin{document}

\title{A Quick Proof of Nonvanishing for Asymptotic Syzygies}
  \author{Lawrence Ein}
  \address{Department of Mathematics, University Illinois at Chicago, 851 South Morgan St., Chicago, IL  60607}
 \thanks{Research of the first author partially supported by NSF grant DMS-1001336.}

\author{Daniel Erman}
\address{Department of Mathematics, University of Wisconsin, Madison, WI 53706}
\thanks{Research of the second author partially supported by NSF grant DMS-1302057}
 \author{Robert Lazarsfeld}
  \address{Department of Mathematics, Stony Brook University, Stony Brook, NY 11794}
 \thanks{Research of the third author partially supported by NSF grant DMS-1439285.}

\maketitle

\section*{Introduction}

The purpose of this note is to give a very quick new approach to   the main cases of the nonvanishing theorems of \cite{ASAV} concerning the asymptotic behavior of the syzygies of a projective variety as the positivity of the embedding line bundle grows. In particular, we present a surprisingly  elementary and concrete approach to the asymptotic nonvanishing of Veronese syzygies, and we obtain effective statements for arithmetically Cohen-Macaulay varieties. 

Let  $X$ be an irreducible projective variety of dimension $n$ over an algebraically closed field $\mathbf{k}$, and let  $L$ be a very ample divisor on $X$, defining an embedding
\[  X \ \subseteq \ \PP H^0(L) \ = \ \PP^{r}. \]
Write $S = \Sym H^0(L)$ for the homogeneous coordinate ring of $\PP^r$, and for a fixed divisor $B$ on $X$ consider the $S$-module
\[   M \ = \ M(B; L) \ = \ \bigoplus_m \, H^0(B + mL). \]
We are interested in the minimal graded free resolution $E\bull  = E\bull(B;L)$ of $M$ over $S$:
\[
 \xymatrix{
0 \ar[r]& E_r \ar[r]   & \ldots \ar[r] & E_1  \ar[r]  &  E_0 \ar[r]    &M \ar[r] &0,}
\]
with $E_p = \oplus S(-a_{p,j})$. Denote by \[ K_{p,q}(B;L) \ = \  K_{p,q}(X,B;L)\] the finite dimensional vector space of degree $p+q$ minimal generators of the $p^{\text{th}}$ module of syzygies of $M$, so that 
\[  E_p(B;L) \ = \ \bigoplus_q \, K_{p,q}(B;L) \otimes_k S(-p-q).  \]  (When $B = \OO_X$, we write simply $K_{p,q}(X;L)$ or $K_{p,q}(L)$ if no confusion seems likely.)  It is elementary that if $L$ is very positive compared to $B$ then non-zero syzygies can only appear in weights $0 \le q \le n+1$, and it turns out that the extremal cases $q =0$ and $q = n+1$ are easy to control. So the first interesting question is to fix $B$ and $1 \le q \le n$, and to ask which groups $K_{p,q}(B;L)$ are nonvanishing when  $L$ becomes very positive. The main result of \cite{ASAV} asserts in effect that -- contrary to what one might have expected by extrapolating from the case of curves -- these groups are eventually non-zero for almost all  values of $p \in [1,r]$. 

Perhaps the most natural instance of these matters occurs when $X = \PP^n$,  $B = \OO_{\PP^n}(b)$ and $L = L_d= \OO_{\PP^n}(d)$, so that one is looking at the syzygies of Veronese varieties. It was established in \cite{ASAV} that if one fixes $q \in [1,n]$ and $b \ge 0$, then for $d \gg 0$ one has
\begin{equation} \label{Vero.Non.Van.1}
 K_{p,q}(\PP^n, B; L_d) \ \ne \ 0 \end{equation}
for every value of $p$ satisfying
\small
\begin{equation} \label{Vero.Non.Van.Range}
\binom{d+q}{q} \, - \,  \binom{d-b-1}{q} \, - \, q   \ \le \  p \ \le \ \binom{d +n  }{n} \,  - \, \binom{d+n -q}{n-q} \, + \,  \binom{n+b}{n-q} \, - q-1. \end{equation}
\normalsize
For example, when $n = 2$ and $b = 0$, this asserts that 
\begin{equation} \label{OP.Non.Van} K_{p,2}(\PP^2; \OO_{\PP^2}(d) ) \ \ne \ 0  \ \ \text{for }\  3d - 2 \, \le p \, \le \binom{d+2}{2} -3,  \end{equation}
which was the main result of the interesting paper \cite{OP} of Ottaviani and Paoletti. The proof in \cite{ASAV} of the Veronese  nonvanishing theorem  involved a rather elaborate induction on $n$ to show that certain well-chosen secant planes to the Veronese variety force the presence of non-zero syzygies. For $b = 0$ the same  statement was obtained independently in characteristic zero by Weyman, who identified  certain representations of $\textnormal{SL}(n+1)$ that appear non-trivially in the $K_{p,q}$.  Some other work concerning Veronese syzygies appears in \cite{Rubei}, \cite{BCR}, \cite{BCR2},  and \cite{EGHP}, and a simplicial analogue of the results of \cite{ASAV} is given in \cite{CKW}.

The  goal of the present paper is to present a  much simpler and more elementary approach to the nonvanishing of Veronese syzygies, and to use this method to establish effective statements for arithmetically Cohen-Macaulay varieties. The idea is that one can reduce  the question to  elementary computations with monomials by modding out by a suitable regular sequence. In order to explain how this goes,  consider the problem of proving the first case of the Ottaviani-Paoletti statement \eqref{OP.Non.Van}, namely that if $d \ge 3$ then
\[
K_{3d-2,2}(\PP^2; \OO_{\PP^2}(d)) \ \ne \ 0. \tag{*}
\]
Writing $S_k$ for the degree $k$ piece of the polynomial ring $S = \mathbf{k}[x,y,z]$, it is well-known that the group in question can be computed as the cohomology at the middle term of the Koszul-type complex
\[
... \lra \Lambda^{3d-1} S_d \otimes S_{d} \lra  \Lambda^{3d-2} S_d \otimes S_{2d} \lra  \Lambda^{3d-3} S_d \otimes S_{3d} \lra ...
\]
The most naive approach to (*) would be  to write down explicitly a cocycle representing a non-zero element in $K_{3d-2,2}$, but we do not know how to do this.\footnote{The argument in \cite{OP} proceeds by using duality to reformulate the question as the nonvanishing of a $K_{p^\pr,0}$, where one can exhibit directly the required class.} On the other hand,  consider the ring
\[   \overline{S} \ = \ S / (x^d, y^d, z^d). \]
As $x^d, y^d, z^d$ form a regular sequence in $S$, the dimensions of the Koszul cohomolgy groups of $\overline{S}$ are the same as those of $S$, and hence the question is equivalent to proving the nonvanishing of the cohomology of
\[
... \lra \Lambda^{3d-1} \ol S_d \otimes \ol S_{d} \lra  \Lambda^{3d-2} \ol S_d \otimes \ol S_{2d} \lra  \Lambda^{3d-3} \ol S_d \otimes \ol S_{3d} \lra ...  \  . \tag{**}
\]
Now view  $\ol S$ as the ring spanned by monomials in which no variable appears with exponent $\ge d$, with multiplication governed by the vanishing of the $d^{\text{th}}$ powers of each variable. The plentiful presence of  zero-divisors in $\ol{S}$ means that one can write down by hand many monomial Koszul cycles: for instance if $m_1, \ldots, m_{3d-2}$ are monomials of degree $d$ each divisible by $x$ or $y$, then 
\[ c \ = \  m_1 \wedge \ldots \wedge m_{3d-2} \otimes x^{d-1}y^{d-1}z^2\]
gives  a cycle for the complex (**). Note next that $x^{d-1}y^{d-1}z^2$ has exactly $3d-2$ monomial divisors of degree $d$ with exponents $\le d-1$, viz:
\begin{gather*}
x^{d-1}y \, , \, x^{d-2}y^2\, , \, \ldots\, , \,, x^2y^{d-2}\, , \,x y^{d-1}     \\
x^{d-1}z\, , \, x^{d-2}y z \, , \, \ldots\, , \, xy^{d-2}z\, , \, y^{d-1}z   \\
x^{d-2}z^2 \, , \, x^{d-3}yz^2 \, , \, \ldots \, , \, xy^{d-3}z^2 \, , \, y^{d-2}z^2.
\end{gather*}
Taking these as the $m_i$, we claim that the resulting cycle $c$ represents a non-zero Koszul cohomolgy class. In fact, suppose that $c$  appears even as a term in the Koszul boundary of an element
\[  e \ = \ n_0  \wedge   n_1   \ \ldots \  \wedge n_{3d-2} \otimes g,\]
where the $n_i$ and $g$ are monomials of degree $d$. After re-indexing and introducing a sign we can suppose that 
\[   c \ = \ n_1 \wedge \ldots \wedge n_{3d-2} \otimes n_0g. \] Then the $\{ n_j \}$   with $j \ge 1$ must be a re-ordering of the monomials $\{ m_i \}$ dividing $x^{d-1}y^{d-1}z^2$. On the other hand $n_0g = x^{d-1}y^{d-1}z^2$, so $n_0$ is also such a divisor. Therefore $n_0$ coincides with one of $n_1, \ldots, n_{3d-2}$,  and hence $e  = 0$, a contradiction.  

We show that this sort of argument gives the nonvanishing of Veronese syzygies appearing in equation  \eqref{Vero.Non.Van.Range}, as well as a few further cases that were conjectured in \cite{ASAV}.  Moreover, we obtain a new statement that subsumes the previous statement and includes all values of $b,q,$ and $d$ (Theorem~\ref{PPn all b}). More interestingly, whereas the results of \cite{ASAV} for varieties other than $\PP^n$ were ineffective, we are able here to give effective statements for a large class of general varieties.

Specifically, consider an  arithmetically Cohen-Macaulay variety
$X \subseteq \PP^m$ of dimension $n$, and for $d > 0, b\ge 0$ write
\[   L_d \ = \ \OO_X(d) \ \ , \ \ B = \OO_X(b). \]  Put $ c(X)=  \min \big \{ k \, |\,  H^n(X, \OO_X(k-n)) = 0 \big \}$, the Castelnuovo-Mumford regularity of $\OO_X$, and write
\[ r_d \, =\, \dim H^0(X, \OO_X(d))  \ \ , \ \ r_d^\pr  \, = \, r_d - (\deg X)(n+1).  \]
We prove:
\begin{theoremnn}
Assume that $q \in [1, n-1]$, and fix $d \ge b + q + c(X) + 1$. Then
\[ K_{p,q}(X,B; L_d)\  \ne \ 0\] for every value of $p$ satisfying
\small
\[
\deg(X) (q + b + 1) \binom{d + q-1}{q-1} \ \le \ p \ \le \  {r}^\pr_d - \deg(X) (d-q-b) \binom{d + n-q-1 }{n-q-1}.
\]
\normalsize
\end{theoremnn}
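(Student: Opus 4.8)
The plan is to run the same "mod out by a regular sequence, then write down monomial Koszul cycles" strategy that worked for $\PP^n$, but now carried out relative to $X$. First I would choose $n+1$ general linear forms $\ell_0,\dots,\ell_n$ on $\PP^m$; since $X$ is arithmetically Cohen-Macaulay of dimension $n$, their restrictions to $X$ form a regular sequence on the section ring $\bigoplus_k H^0(X,\OO_X(k))$, and $\bigoplus_m H^0(B+mL_d) = M(B;L_d)$ is a Cohen-Macaulay module over the Veronese subring, so modding out by the $d$-th Veronese piece of this sequence preserves the dimensions of all the Koszul cohomology groups $K_{p,q}$. Write $\ol A$ for the resulting Artinian quotient ring and $\ol M$ for the resulting Artinian module; the bound $d \ge b+q+c(X)+1$ is exactly what forces the relevant pieces of $\ol A$ and $\ol M$ to be computed by the expected cohomology (no correction terms from $H^{>0}$), so that $\dim \ol A_1 = r_d$, $\dim \ol A_d \sim (\deg X)\binom{d+\bullet}{\bullet}$-type quantities, etc. This reduces the nonvanishing of $K_{p,q}(X,B;L_d)$ to producing a nonzero cohomology class of the Koszul-type complex $\Lambda^{p+1}\ol A_1 \otimes \ol M_{q-1} \to \Lambda^p \ol A_1 \otimes \ol M_q \to \Lambda^{p-1}\ol A_1 \otimes \ol M_{q+1}$.

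Next I would exhibit the class. As in the Veronese case, I want a "pure" cycle $c = m_1 \wedge \dots \wedge m_p \otimes g$ where $g \in \ol M_q$ is a carefully chosen element that is annihilated by many linear forms, and $m_1,\dots,m_p$ are linear forms in $\ol A_1$ with $m_i \cdot g = 0$, so that $\partial c = 0$ automatically. The numerology in the statement — the lower bound $\deg(X)(q+b+1)\binom{d+q-1}{q-1}$ and the upper bound involving $(d-q-b)\binom{d+n-q-1}{n-q-1}$ — should come from counting, inside $\ol A$, the linear forms that kill a generic "extremal" element $g$; concretely I expect $g$ to be built from a product of $q$ of the $\ell_i$ (times something from $B$), sitting in a complementary $\Ext$/socle-type position, and the two binomials count respectively the dimension of the space of linear forms annihilating $g$ (giving the range of admissible $p$ from below) and the codimension forcing the pair $(g, \text{the }m_i)$ to be "saturated" (giving the cutoff from above). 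For the lower end one takes all such linear annihilators as the $m_i$; for larger $p$ one has freedom to include additional $m_i$, and the upper bound is where that freedom runs out.

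Finally I would prove that $c$ is not a boundary, by the same divisibility/reindexing argument as in the introduction: if $c$ appears as a term of $\partial e$ for $e = n_0 \wedge \dots \wedge n_p \otimes h$, then after reindexing $\{n_1,\dots,n_p\}$ must be a permutation of $\{m_1,\dots,m_p\}$ and $n_0 h$ must equal $g$, forcing $n_0$ to be one of the $m_i$ already present, so $e = 0$. The step I expect to be the main obstacle is the second one: in the Veronese case one has an explicit monomial basis and can literally list the divisors of $g = x^{d-1}y^{d-1}z^2$, whereas for a general ACM variety $\ol A$ is only described abstractly, so I will need to use the $B$-twisted structure and a genericity/Bertini argument to produce an element $g \in \ol M_q$ whose space of linear annihilators has exactly the predicted dimension, and to check that the "no common factor" obstruction to being a boundary still holds in this coordinate-free setting. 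Making the counting precise — i.e. matching the abstract dimension counts in $\ol A$ and $\ol M$ against the binomial expressions in the statement, and verifying the endpoints are attained — is where the real work lies; the regular-sequence reduction and the non-boundary argument are, by contrast, formal.
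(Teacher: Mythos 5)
Your overall strategy matches the paper's --- mod out by $d$-th powers of a regular sequence to pass to Koszul cohomology of an Artinian quotient, exhibit a ``pure'' monomial-like cycle $m_1\wedge\cdots\wedge m_p\otimes g$, and rule out boundaries by a reindexing argument --- and you correctly flag the real obstacle: in the Veronese case one literally works with monomials, but for a general ACM variety $\overline{R}$ has no a priori distinguished basis, so ``after reindexing $\{n_i\}$ must be a permutation of $\{m_i\}$'' has no meaning. However, the fix you propose (a Bertini/genericity argument to produce $g$ with the predicted annihilator dimension) does not close this gap, because the non-boundary claim is not a dimension count: it is a term-by-term argument that requires expressing every element of $\overline{R}$ in a fixed multiplicative basis, and a ``generic'' $g$ gives you no such structure.

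The missing idea is \emph{Noether normalization}. After a general change of coordinates, $x_0,\dots,x_n$ form a system of parameters, and $S=\bfkk[x_0,\dots,x_n]\hookrightarrow R$ is a Noether normalization. Since $R$ is Cohen--Macaulay, $R$ is a \emph{free} $S$-module, and one chooses a monomial basis $\Lambda=\{y^\beta\}$ (with $1\in\Lambda$, $|\Lambda|=\deg X$, $\max\deg y^\beta=c(X)$). Then $\overline{R}=R/(x_0^d,\dots,x_n^d)$ is free over $\overline{S}$ with basis $\Lambda$, ``monomials'' $x^\alpha y^\beta$ in $\overline{R}$ are well-defined, and the non-boundary argument runs exactly as in the introduction (this is Lemma~\ref{CM.Non-Van.Cycle.Lemma}). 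The element $f$ is not something ``generic'' or involving a twist by $B$: it is literally the pure $x$-monomial $f=x_0^{d-1}\cdots x_{q-1}^{d-1}x_q^{q+b}\in\overline{S}_{qd+b}\subseteq\overline{R}_{qd+b}$, and the sets $E_f$ (degree-$d$ monomials $x^\alpha y^\beta$ with $x^\alpha\mid f$) and $Z_f$ (degree-$d$ monomial annihilators) are counted explicitly.

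Two secondary points you have backwards. First, the hypothesis $d\ge b+q+c(X)+1$ does not enter to kill higher cohomology; it is used to ensure $E_f\subseteq Z_f$: any $w=x^\alpha y^\beta\in E_f$ has $|\beta|\le c$ and $a_q\le q+b$, so $a_0+\cdots+a_{q-1}\ge d-(q+b)-c>0$, hence $w$ is divisible by some $x_i$ with $i<q$ and therefore annihilates $f$. Second, the lower bound on $p$ comes from the \emph{divisor} count $|E_f|$ (the mandatory factors $\det E_f$), and the upper bound from the \emph{annihilator} count $|Z_f|$ --- not the other way around, as your fourth paragraph suggests.
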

\noi Analogous statements hold, with slightly different numbers, when $q =0$ and $q = n$; see Theorem~\ref{First.CM.Theorem} below.  We note that Zhou \cite{Zhou1} has given effective results for adjoint-type (and in particular, for very positive) line bundles $B$ on an arbitrary smooth complex projective variety. It would be interesting to know whether one could recover his statement by the present techniques: see Remark \ref{Adjoint.Type.Remark}. 

We wish to thank Xin Zhou for valuable discussions, and the referee for some suggestions which significantly streamlined the statement of Theorem~\ref{PPn all b}.

\setcounter{section}{1}

\numberwithin{equation}{section}
\section{Nonvanishing Results for $\PP^n$}

This section is devoted to the nonvanishing results for Veronese syzygies. 

Let $\bfkk$ be any field, and consider the polynomial ring $ S = \bfkk[x_0, \dots, x_n]$.  Given  $d\geq 1$ we denote by  $S^{(d)} \subseteq S$ the Veronese subring \[  S^{(d)} \ = \ \bigoplus_{j\in \ZZ} S_{jd} \ \subseteq \ S\]  of $S$.  For an $S$-module $M$, we write $\Mdb$ for the  $S^{(d)}$-module $\bigoplus_{j\in \ZZ} M_{b+jd}$.  Note that $\Mdb$ is also naturally a $\Sym(S_d)$-module.  We
denote by
\[  K_{p,q}(n,b;d) \ = \ K_{p,q}^{\Sym(S_d)}(\Sdb) \] 
 the Koszul cohomology group of $\Sdb$, where $\Sdb$ is considered as a $\Sym(S_d)$-module. Thus $K_{p,q}(n,b;d)$ is the cohomolgy of the  Koszul-type complex
\[  \ldots \lra \Lambda^{p+1}S_d \otimes S_{(q-1)d + b} \lra \Lambda^{p}S_d \otimes S_{qd + b} \lra \Lambda^{p-1}S_d \otimes S_{(q+1)d + b}\lra \ldots  \]
and \[ K_{p,q}(n,b;d) \ = \  K_{p,q}(\PP^n, \OO_{\PP^n}(b); \OO_{\PP^n}(d)). \]
 Since
\[
K_{p,q}(n,b;d) \ = \  K_{p,q+1}(n,b-d;d),
\]
we will always assume that $0\leq b \leq d-1$.

The following result is more precise than those in \cite{ASAV}, since in that paper, $b$ was always fixed and $d\geq n+1$.

\begin{theorem}\label{PPn all b}
Fix any $d$, any $b\in [0,d-1]$ and any $q\in [0, n+1-\frac{n+b}{d}]$.  Define $m$ and $r$ as the quotient and remainder of $qd+b$ by $d-1$. Then:
\[
K_{p,q}(n,b;d) \ \ne \ 0 \]
for all $p$ in the range
\small
\[
\binom{m+d}{m}-\binom{m+d-r-1}{m}-m \le p \le \binom{n+d}{n}+ \binom{n-m+r}{n-m}- \binom{n-m+d}{n-m}-m-1.
\]
\normalsize
\end{theorem}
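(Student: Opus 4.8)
The plan is to mimic the worked example from the introduction, but in the generality of arbitrary $n$, $b$, $d$ and weight $q$. First I would pass from $S$ to the quotient ring $\overline{S} = S/(x_0^{d-1}, \dots, x_n^{d-1})$. Here the exponent $d-1$ (rather than $d$) is forced: we want monomials whose "building blocks" of degree $d$ all have each exponent $\le d-1$, which is exactly the constraint governing the example, and it is what makes $m$ and $r$ appear as the quotient and remainder of $qd+b$ by $d-1$. Since $x_0^{d-1}, \dots, x_n^{d-1}$ is a regular sequence in $S$, the Koszul cohomology $K_{p,q}(n,b;d)$ is unchanged if we replace $S_k$ by $\overline{S}_k$ throughout the complex
\[
\ldots \lra \Lambda^{p+1}\overline{S}_d \otimes \overline{S}_{(q-1)d + b} \lra \Lambda^{p}\overline{S}_d \otimes \overline{S}_{qd + b} \lra \Lambda^{p-1}\overline{S}_d \otimes \overline{S}_{(q+1)d + b}\lra \ldots,
\]
so it suffices to produce a nonzero cohomology class here.

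\textbf{Constructing the cycle.}
Next I would write down an explicit monomial cycle. Choose a "socle-type" monomial $g$ of degree $qd+b$ in $\overline{S}$ — concretely, take $g = x_0^{d-1}\cdots x_{m-1}^{d-1} x_m^{r} x_{m+1}^{s}$ for an appropriate small exponent $s$, so that $\deg g = qd+b$ and $g$ uses up the available exponents as fully as possible (this is where $m$ and $r$ enter). Let $m_1, \dots, m_p$ be a choice of $p$ distinct monomials of degree $d$, each a divisor of $g$ with all exponents $\le d-1$; then, exactly as in the introduction,
\[
c \ = \ m_1 \wedge \cdots \wedge m_p \otimes \big(g / (m_1 \cdots) \big)\text{-adjusted factor}
\]
— more precisely one takes $c = m_1 \wedge \cdots \wedge m_p \otimes h$ where $m_i h$ all equal a common multiple related to $g$ — is a Koszul cycle, because multiplying any $m_i$ back in produces a monomial in which some variable has exponent $\ge d-1$, hence vanishes in $\overline{S}$, or produces a repeated wedge factor. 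The count of how many degree-$d$ monomial divisors of the relevant socle monomial exist, with all exponents $\le d-1$, is precisely what produces the numerical range for $p$ in the statement: the lower bound $\binom{m+d}{m} - \binom{m+d-r-1}{m} - m$ is the smallest such count over admissible choices of the "shape" of $g$, and the upper bound is the largest.

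\textbf{Nonvanishing of the class.}
Finally I would show $c$ is not a boundary, by the same divisibility argument as in the introduction: if $c$ appears as a term of $\partial e$ with $e = n_0 \wedge \cdots \wedge n_p \otimes (\text{monomial})$, then after reindexing $\{n_1,\dots,n_p\}$ must be a reordering of $\{m_1,\dots,m_p\}$ and $n_0$ must also be a degree-$d$ divisor of the same socle monomial with exponents $\le d-1$ — but all such divisors already occur among the $m_i$, forcing a repeated wedge factor and hence $e=0$. One must check that no other term of $\partial e$ can cancel $c$; this follows because the monomial labels determine the term. The main obstacle I anticipate is not any single step but the bookkeeping: choosing the socle monomial $g$ and the family of divisors $\{m_i\}$ so that (i) the cycle condition genuinely holds in $\overline{S}$ for every $p$ in the stated interval — in particular near the two endpoints, where one has the least freedom — and (ii) the counts of admissible degree-$d$ divisors match the binomial expressions in the theorem exactly, rather than merely up to lower-order error. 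Getting the interval of valid $p$ to be a single unbroken range (one may need to vary the shape of $g$ as $p$ moves, or to add "filler" monomials) is the delicate combinatorial heart of the argument. Handling the boundary weights $q=0$ and $q=n+1$, where $\overline{S}$-duality or a direct argument replaces the cycle construction, is a routine addendum by comparison.
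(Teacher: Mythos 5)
Your overall strategy is the right one — Artinian reduction followed by an explicit monomial cycle and a divisibility argument — and it is essentially the approach the paper takes. However, there are two concrete errors that break the argument as written.

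\textbf{Wrong regular sequence.} You quotient by $x_0^{d-1},\dots,x_n^{d-1}$, but this does not preserve the Koszul cohomology. The group $K_{p,q}(n,b;d)$ is a Tor over $\Sym(S_d)$, and the Artinian-reduction step requires a regular sequence of \emph{linear forms in $\Sym(S_d)$}, i.e.\ elements of $S_d$. The monomials $x_i^{d-1}$ have degree $d-1 \ne d$, so they are not linear forms in $\Sym(S_d)$ and the dimension-preservation fails. The correct choice is $\overline{S}=S/(x_0^d,\dots,x_n^d)$, each $x_i^d$ being a degree-$d$ element and hence a linear form in $\Sym(S_d)$. The $d-1$ in the theorem's quotient-and-remainder statement arises because, after killing $d$-th powers, the maximal allowed exponent in a surviving monomial is $d-1$; your own justification (``building blocks of degree $d$ all have each exponent $\le d-1$'') already points to killing $d$-th powers, contradicting your chosen quotient. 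Moreover, in $S/(x_i^{d-1})$ your proposed socle monomial $g=x_0^{d-1}\cdots x_{m-1}^{d-1}x_m^r x_{m+1}^s$ is identically zero, so the construction collapses at the start.

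\textbf{Muddled cycle and boundary conditions.} The cycle should be $\zeta\otimes f$ where $f$ is the socle monomial itself and $\zeta\in\bigwedge^p Z_f$, with $Z_f$ the degree-$d$ monomials that \emph{annihilate} $f$ in $\overline{S}$ — not divisors of $f$, and not with some ``adjusted'' tensor factor $h$. Annihilation is what makes $\partial(\zeta\otimes f)=0$. Separately, for the class to be a non-boundary you must wedge in \emph{all} of $D_f$ (the degree-$d$ divisors of $f$), so your wedge must contain $\det D_f$; your claim that ``all such divisors already occur among the $m_i$'' is only justified once you build that in from the start. The two roles are reconciled by the key hypothesis $D_f\subseteq Z_f$, which you should state and verify explicitly: it holds for $f=x_0^{d-1}\cdots x_{m-1}^{d-1}x_m^r$ because $r\le d-1$ forces any degree-$d$ divisor of $f$ to involve one of $x_0,\dots,x_{m-1}$ and hence to annihilate $f$. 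With that hypothesis, the range of $p$ is simply $|D_f|\le p\le |Z_f|$ obtained by adjoining arbitrary extra elements of $Z_f\setminus D_f$ — there is no need to ``vary the shape of $g$'' or add ad hoc filler, and no separate treatment of $q=0$ or $q=n$; the paper handles all admissible $q$ by the single choice of $f$.

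With the power corrected to $d$, the cycle restated as $(\det D_f\wedge\zeta)\otimes f$ with $\zeta\in\bigwedge Z_f$, and $D_f\subseteq Z_f$ made explicit, your proposal matches the paper's proof; the remaining work is the binomial count of $|D_f|$ and $|Z_f|$, which you correctly anticipate.
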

\noi When $q\notin [0, n+1-\frac{n+b}{d}]$, then $K_{p,q}(n,b;d)$ is automatically zero; see Remark~\ref{rmk:q range}. On the other hand, if $d \ge n + b$, then the non-vanishing holds for all $0 \le q \le n$.
%
%

For the proofs, the idea is to mod out by a regular sequence to arrive at a situation where we can work by hand with monomials.  Specifically, by the technique of Artinian reduction, we can compute syzygies after modding out by a linear regular sequence.  Having fixed $d > 0$, we put 
\[ \overline{S} \ =_\text{def} \  S/(x_0^d, \dots, x_n^d).\]
Slightly abusively, we view $\ol{S}$ as the graded ring spanned by monomials in the $x_i$ in which no variable appears with exponent $\ge d$, with multiplication determined by the vanishing of the $d^{\text{th}}$ power of each variable.

  Since $x_0^d, \dots, x_n^d$ is a linear regular sequence in $\Sym S_d$,  modding out by these powers does not affect the Koszul cohomology groups. In other words:
\[
K_{p,q}^{\Sym(S_d)}(\Sdb) \ \cong \ K_{p,q}^{\Sym(\overline{S}_d)}\left(\Sdb\otimes_{\Sym(S_d)} \Sym(\overline{S}_d)\right)\ \cong \ K_{p,q}^{\Sym(\overline{S}_d)}(\overline{S}(b)^{(d)}).
\] 
It thus suffices to compute this last group, which is the homology at the middle of
\begin{equation}\label{Monomial.Complex}
 \xymatrix{
 \bigwedge^{p+1} \overline{S}_d \otimes  \overline{S}_{(q-1)d+b}\ar[r]^-{\partial_{p+1}}& \bigwedge^{p}  \overline{S}_d\otimes  \overline{S}_{qd+b}\ar[r]^-{\partial_p}& \bigwedge^{p-1}  \overline{S}_d\otimes  \overline{S}_{(q+1)d+b}.
 }
\end{equation}
In particular, $K_{p,q}(n,b;d) \ne 0$ if and only if this complex has non-trivial homology, and we are therefore reduced to studying  cycles and boundaries  in  \eqref{Monomial.Complex}.

We start with some notation that will prove useful. Fix a finite set of elements $P \subseteq \ol{S}$ (which in practice will be a collection of monomials).
\begin{definition}
 We write 
$\zeta \in \bigwedge P$ (or $\zeta \in \bigwedge^s P$) if \[
\zeta\ = \ m_1\wedge \dots \wedge m_s\]with $m_i\in  P$  for all $i$.  We write $\zeta = \det   P$ if $\zeta$ is the wedge product of all elements in $  P$ (in some fixed order).  We say that a wedge product $m_1\wedge \dots \wedge m_s$ is a monomial if each $m_i$ is a monomial. \end{definition}

The following lemma guarantees the existence of many non-zero monomial classes in the cohomology of \eqref{Monomial.Complex}. It systematizes the computations worked out for a special case in the Introduction.
\begin{lemma}\label{lem:cycles and boundaries}
Fix a nonzero monomial $f\in  \overline{S}_{qd+b}$, and denote by
\[   Z_f \ , \ D_f \ \subseteq \ \ol{S}_d \]
respectively the set of degree $d$ monomials that annihilate or divide $f$.  
\begin{itemize}
	\item[(i).]   If $\zeta \in \bigwedge^p Z_f$,  then
$
\zeta \otimes f \in \ker \partial_p.
$
\vskip 8pt
	\item[(ii).]  Let $\zeta \in \bigwedge^s  \overline{S}_d$ be any monomial such that such that $\det  D_f \wedge \zeta \otimes f$ is nonzero.  Then
\[
(\det    D_f \wedge \zeta) \otimes f \  \notin \ \im \partial_{(|  D_f|+s)}.
\]
\end{itemize}
\end{lemma}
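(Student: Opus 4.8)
The plan is to verify the two assertions directly from the definition of the Koszul differential on the monomial complex \eqref{Monomial.Complex}, imitating the hand computation carried out in the Introduction.

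For part (i), I would recall that the differential $\partial_p$ sends $m_1 \wedge \cdots \wedge m_p \otimes g$ to $\sum_i (-1)^i \, m_1 \wedge \cdots \wedge \widehat{m_i} \wedge \cdots \wedge m_p \otimes m_i g$. When $\zeta = m_1 \wedge \cdots \wedge m_p$ with every $m_i \in Z_f$, each term $m_i f$ in $\partial_p(\zeta \otimes f)$ vanishes in $\overline{S}$ by the very definition of $Z_f$ (each $m_i$ annihilates $f$). Hence every summand is zero and $\zeta \otimes f \in \ker \partial_p$; this is essentially immediate.

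For part (ii), the argument is the non-triviality argument from the Introduction, now stated in the systematic language of $D_f$. Suppose for contradiction that $(\det D_f \wedge \zeta) \otimes f$ occurs in $\im \partial_{(|D_f| + s)}$, so it equals $\partial(e)$ for some chain $e$ and in particular appears as one of the monomial terms of $\partial(e)$ for some monomial summand $e = n_0 \wedge n_1 \wedge \cdots \wedge n_{|D_f|+s-1} \otimes g$ of the expansion of that preimage into monomials. After reindexing and absorbing a sign, such a term has the form $n_1 \wedge \cdots \wedge n_{|D_f|+s-1} \otimes n_0 g$, and matching it against $(\det D_f \wedge \zeta)\otimes f$ forces $n_0 g = f$ and $\{n_1, \dots, n_{|D_f|+s-1}\}$ to be a reordering of the multiset $D_f \cup \{\text{factors of }\zeta\}$. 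From $n_0 g = f$ we get $n_0 \mid f$, so $n_0 \in D_f$; but every element of $D_f$ already appears among $n_1, \dots, n_{|D_f|+s-1}$, so $n_0$ is repeated in the wedge $n_0 \wedge n_1 \wedge \cdots$, forcing $e = 0$. Since this holds for every monomial summand that could contribute the term $(\det D_f \wedge \zeta)\otimes f$, no cancellation can produce it, and we reach the desired contradiction. The hypothesis that $\det D_f \wedge \zeta \otimes f$ is nonzero is exactly what guarantees the target monomial is a genuine (nonzero) basis element, so that "appearing as a term" is meaningful.

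The main obstacle, such as it is, is bookkeeping rather than conceptual: one must argue carefully that the term $(\det D_f \wedge \zeta) \otimes f$ cannot arise with cancellation from distinct monomial summands of a preimage, i.e. that it is enough to rule out each individual monomial summand producing it. This follows because the monomials $n_1 \wedge \cdots \wedge n_{|D_f|+s-1} \otimes n_0 g$ appearing in $\partial(e)$ are honest signed basis vectors of $\bigwedge^{|D_f|+s-1}\overline{S}_d \otimes \overline{S}$, and any preimage whose boundary has $(\det D_f \wedge \zeta)\otimes f$ as a nonzero coefficient must contain a monomial summand $e$ contributing that basis vector with nonzero coefficient — and we have just shown each such $e$ vanishes. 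I would also remark that one should fix the ordering used in $\det D_f$ and in $\zeta$ once and for all, so that signs are unambiguous; the signs play no role beyond this.
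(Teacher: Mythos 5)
Your proof is correct and follows essentially the same route as the paper: part (i) is the same one-line observation that each term of $\partial_p(\zeta\otimes f)$ involves $m_i f = 0$, and part (ii) is the same monomial-matching argument showing that any monomial preimage of $(\det D_f\wedge\zeta)\otimes f$ must have a repeated wedge factor and hence vanish. Your extra remark spelling out why cancellation among distinct monomial summands cannot rescue a preimage is a sensible clarification of a point the paper leaves implicit, but it does not change the substance of the argument.
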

\begin{proof}
For (i), write $\zeta=m_1\wedge \dots \wedge m_s$ with $m_i\in  Z_f$.  Since $m_{i}f=0 \in \ol{S} $ for all $i=1,\dots, s$, the assertion is immediate. Turning to (ii), 
assume that
\[  \partial \big( \sum \xi_j\otimes g_j\big) \ =  \ \big(\det   D_f\wedge \zeta \big) \otimes f \]  Then there must be some index $j$ and some monomial appearing in  $\xi_j \otimes g_j$ that maps to the monomial $\big(\det    D_f\wedge \zeta\big) \otimes f$.  In particular, $\xi_j \otimes g_j$ must contain  a non-zero monomial of the form $(m\wedge \det   D_f \wedge \zeta) \otimes g$ where $mg=f$.  But then $m\in   D_f$ and hence 
$m\wedge \det  D_f=0$, a contradiction.
\end{proof}

\begin{corollary} \label{Kpq.nonzero.corollary}
Given $q,d$ and $b$,  let $f\in  \overline{S}_{qd+b}$ be a monomial such that $D_f\subseteq  Z_f$.  Then any non-zero monomial of the form 
\[ \big(\det  D_f\wedge \zeta \big) \otimes f, \] where $\zeta\in \bigwedge Z_f$, represents a nonzero element of the cohomology of \eqref{Monomial.Complex}.
In particular, \[ K_{p,q}(n,b;d) \ \ne \ 0\] for every $p$ satisfying
\[  |   D_f | \ \le  \ p \ \le \ |Z_f|. \ \ \ \ \qed\]
\end{corollary}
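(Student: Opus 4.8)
The plan is to read the statement straight off the two halves of Lemma~\ref{lem:cycles and boundaries}, using the hypothesis $D_f \subseteq Z_f$ to glue them together. Fix a monomial $f \in \overline{S}_{qd+b}$ with $D_f \subseteq Z_f$, fix $\zeta \in \bigwedge Z_f$ such that $\eta := (\det D_f \wedge \zeta) \otimes f$ is nonzero, and let $p$ denote the number of wedge factors in $\det D_f \wedge \zeta$, so that $\eta$ sits in the middle term $\bigwedge^p \overline{S}_d \otimes \overline{S}_{qd+b}$ of the complex \eqref{Monomial.Complex}.

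First I would verify that $\eta$ is a cycle. Since $D_f \subseteq Z_f$ and every factor of $\zeta$ lies in $Z_f$, the product $\det D_f \wedge \zeta$ lies in $\bigwedge^p Z_f$, so Lemma~\ref{lem:cycles and boundaries}(i) gives $\eta \in \ker \partial_p$. Next I would verify that $\eta$ is not a boundary: here $\zeta$ is a monomial wedge and $(\det D_f \wedge \zeta)\otimes f$ is nonzero by assumption, so Lemma~\ref{lem:cycles and boundaries}(ii) applies and shows $\eta$ is not in the image of the differential entering the $p$-th spot of \eqref{Monomial.Complex}. Hence $\eta$ represents a nonzero class in the homology of \eqref{Monomial.Complex} at that spot, which by the identification recalled just above \eqref{Monomial.Complex} is precisely $K_{p,q}(n,b;d)$. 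This gives the first assertion.

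For the final ``in particular'', I would run this with explicit choices of $\zeta$. Because $D_f \subseteq Z_f$, the complement $Z_f \setminus D_f$ has exactly $|Z_f| - |D_f|$ elements. Given an integer $p$ with $|D_f| \le p \le |Z_f|$, take $\zeta$ to be the wedge of any $p - |D_f|$ distinct monomials from $Z_f \setminus D_f$ (the empty wedge when $p = |D_f|$). Then $\det D_f \wedge \zeta$ is the wedge of $p$ pairwise distinct monomials of $\overline{S}_d$, hence a nonzero basis vector of $\bigwedge^p \overline{S}_d$, and since $f \ne 0$ we get $\eta \ne 0$. Applying the first part to this $\zeta$ yields $K_{p,q}(n,b;d) \ne 0$ for every such $p$.

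I do not expect a real obstacle here: all the work is done in Lemma~\ref{lem:cycles and boundaries}, and the corollary is just the bookkeeping that packages it. The one point to handle carefully is that the cohomological position $p$ at which part (i) produces a cycle must be matched with the position at which part (ii) forbids a boundary --- a matter of counting wedge factors and tracking internal degrees --- together with the elementary observation that $\det D_f \ne 0$, which holds because any monomial dividing $f \in \overline{S}$ automatically has all exponents $< d$, so $D_f$ consists of distinct nonzero monomials of $\overline{S}_d$.
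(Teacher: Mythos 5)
Your proposal is correct and matches the paper's intended argument exactly: the corollary is stated with no separate proof precisely because it is the immediate combination of Lemma~\ref{lem:cycles and boundaries}(i) (cycle, via $D_f\subseteq Z_f$) and (ii) (non-boundary), with the range of $p$ obtained by wedging $\det D_f$ with $p-|D_f|$ distinct elements of $Z_f\setminus D_f$. Your bookkeeping of the homological position and the nonvanishing of the monomial basis element is the right way to fill in the details.
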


\begin{remark}
The Koszul classes just constructed are linearly independent. In fact, keeping the notation of the corollary, and with an appropriate degree twist, there is a natural map from the Koszul complex on the linear forms in $Z_f$ to the minimal free resolution of $ \overline{S}(b)^{(d)}$ over $\Sym \ol{S}_d$ given by sending $1\mapsto f$.  This induced map yields an inclusion of the Koszul subcomplex on the linear forms 
\[  Z_f \setminus D_f \ \subseteq \Sym(\ol{S_d}) \] spanning homological degrees $p=|  D_f|,|  D_f|+1,  \dots, |  Z_f|$.  
In Conjecture~B from \cite{ARBT}, we conjectured that each row of the Betti table of a high degree Veronese looks roughly like the Betti table of a Koszul complex.  Although this result has a similar flavor, the lower bound on the size of the Koszul cohomology groups constructed via this method is far too small to verify the conjecture of \cite{ARBT}.
\end{remark}

Theorems~\ref{PPn all b} now follows from Corollary \ref{Kpq.nonzero.corollary} by choosing a convenient monomial $f$ and computing the number of elements in the resulting sets $Z_f$ and $D_f$.
\begin{proof}[Proof of Theorem~\ref{PPn all b}] Put
\[   s_d \ = \ \dim \ol{S}_d \ = \ \binom{n+d}{d} - (n+1). \]
Let $f$ be the ``leftmost" monomial of $ \overline{S}$ having degree $dq+b$; by our definition of $m$ and $r$ this is the monomial of the form:
\[
f \ = \ x_0^{d-1}\cdot x_{1}^{d-1}\cdot \ldots\cdot x_{m-1}^{d-1}\cdot x_m^{r}.
\]
In order to establish the theorem, it suffices to prove three assertions:
\begin{itemize} 
	\item[(i).] $s_d- |  Z_f| \, =\,   \binom{d+n-m}{d}  - \binom{n+r-m}{r} - n + m -2.$
	\vskip 5pt
	\item[(ii).]  $| D_f|\, =\, \binom{m+d}{m}-\binom{m+d-r-1}{m}-m.$
	\vskip 5pt
	\item[(iii).]  $D_f\subseteq  Z_f$.
\end{itemize}
For (i), observe that $Z_f = (0:_{\overline{S}} f)_d$ contains all monomials of degree $d$ that are divisible by any of $x_0, \dots, x_{m-1}$ as well as those divisible by $x_m^{r}$.  Hence among the $s_d$ monomials in $ \overline{S}_d$, the ones not lying in $ Z_f$ are  the monomials  of degree $d$ appearing in the quotient
\[
\overline{S}/(x_0,\dots, x_{m-1},x_m^{d-r}).
\]
We can compute this via the resolution:
\[
\cdots
\longrightarrow
\frac{\overline{S}(-d)}{(x_0,\dots, x_{m-1})}\overset{\cdot x_m^{r}}{\longrightarrow}
\frac{\overline{S}(-d+r)}{(x_0,\dots, x_{m-1})}\overset{\cdot x_m^{d-r}}{\longrightarrow}
\frac{ \overline{S}}{(x_0,\dots, x_{m-1})}\longrightarrow  \frac{\overline{S}}{(x_0,\dots, x_{m-1},x_m^{d-r})}.
\]
Therefore
\begin{align*}
s_d - | Z_f| &=\dim_k \left(  \overline{S}/(x_0, \dots, x_{m-1},x_m^{d-r})\right)_d\\
&=\dim ( \overline{S}/(x_0, \dots, x_{m-1}))_d  - \dim ( \overline{S}/(x_0, \dots, x_{m-1}))_{r} + \dim ( \overline{S}/(x_0, \dots, x_{m-1}))_0\\
&=\left( \binom{d+n-m}{d} -n+m-1 \right)- \binom{n+r-m}{r}+1.
\end{align*}
For (ii), note that $  D_f$ can be identified with  the degree $d$ monomials of $ \overline{S}/(x_m^{r+1}, x_{m+1}, \dots, x_n)$.  A similar computation yields
\begin{align*}
| D_f| \ &= \ \dim \left(  \overline{S}/(x_m^{r+1}, x_{m+1}, \dots, x_n)\right)_d \\
&=\ \dim \big ( \overline{S}/(x_{m+1}, \dots, x_{n})\big)_d  - \dim \big( \overline{S}/(x_{m+1}, \dots, x_{n})\big)_{d-r-1} + \dim \big( \overline{S}/(x_{m+1}, \dots, x_{n})\big)_0\\
&=\ \left( \binom{m+d}{d} - m-1 \right)- \binom{d-r+m-1}{m}+1.
\end{align*}
Finally, since the exponent of $x_m$ in $f$ is $r\le d-1$, it follows that any element of $D_f$ is divisible at least by one of $x_0, \dots, x_{q-1}$, and hence any such element annihilates $f$. 
\end{proof}
%
%
\begin{remark}\label{rmk:q range}
If $q<0$ then since $b\geq 0$, we clearly have $K_{p,q}(n,b;d)=0$ for all $p$.  
If, $q>n+1-\frac{n+b}{d}$ then we claim that we also get vanishing for all $p$.  We define $q':=n+1-q$ and note that we the above inequality on $q$ is equivalent to having $q'd<n+b$.  We then use duality to compute:
\[
K_{p,q}(n,b;d)^* = K_{r_d-n-p,q'}(n,-n-1-b;d).
\]
Since $\cO(-n-1-b+q'd)$ has no sections when $q'd<n+1+b$, our assumptions imply that this group equals $0$ for all $r_d-n-p\geq 0$ and hence for all $p\geq 0$.
\end{remark}

\begin{remark}
Zhou \cite{Zhou2} has recently established some  results about the asymptotic distribution of torus weights appearing in the $K_{p,q}$ of toric varieties. It would be interesting to know if the present arguments can be used to give more refined information in the case $X = \PP^n$. 
\end{remark}

%

\begin{remark}
It is conjectured in \cite[Conjecture 7.5]{ASAV} that for $d\geq n+1$, the assertion of Theorem \ref{PPn all b} is optimal in the sense that the $K_{p,q}$ in question vanish outside the stated range, and we conjecture that the more general bounds in Theorem \ref{PPn all b} are optimal as well. 

For instance, in the case $d=2$ and $b=0$, the full resolution is known in characteristic $0$ by work of J\'{o}zefiak, Pragacz and Weyman in \cite{JPW}.  Their theorem shows that, as long as $n+1 \geq 2q$, $K_{p,q}(n,0;2)=0$ starting with $p=2q^2-q$, and this value lines up with the lower bound in Theorem~\ref{PPn all b}.

It would be exceedingly interesting to know whether one can use the approach introduced here to make progress on this conjecture, at least in the case $d \gg 0$ as in \cite[Problem 7.7]{ASAV} . Unfortunately it seems that one can't work only with monomials -- it's possible for instance that a monomial Koszul cocyle appears as the boundary of non-monomial elements. It is tempting to wonder whether there are Gr\"obner-like techniques that could be used to study the issue systematically. We note that Raicu \cite{Raicu} has reduced the general vanishing conjecture   \cite[Conjecture 7.1]{ASAV} to the case of Veronese syzygies.
\end{remark}


\section{Nonvanishing for arithmetically Cohen-Macaulay schemes}
In this section we extend the results of the previous section to the setting of arithmetically Cohen-Macaulay schemes.

Consider an arithmetically Cohen-Macaulay scheme $X\subseteq \PP^{m}$ of dimension $n$  over the field $\bfkk$, and let 
\[ R \ = \ \oplus \, H^0(X, \OO_X(k))\]   be the  homogeneous coordinate ring of $X$. Setting $L_d = \OO_X(d)$ and $B = \OO_X(b)$, we are interested in the syzygies
\[  K_{p,q}(X, B; L_d) \ = \  K_{p,q}(R(b)^{(d)})\] of $B$ with respect to $L_d$ for $d \gg 0$. Put 
\[  c\  = \  c(X) \ = \  \min \big \{ k \, |\,  H^n(X, \OO_X(k-n)) \, = \, 0 \big \},\]
and write 
\[ r_d \, =\, \dim H^0(X, \OO_X(d))\, = \, \dim R_d \ \ , \ \ r_d^\pr  \, = \, r_d - (\deg X)(n+1).  \]

Our first result holds when $d \ge b +q + c + 1$.
\begin{theorem} \label{First.CM.Theorem}  Fix $b \in [0, d-q-1-c]$. 
\begin{enumerate}
\item[$($i$)$.] If $q \in [1, n-1]$, then
$ K_{p,q}(X,B; L_d) \ne 0$ for
\small
\[
(\deg X) (q + b + 1) \binom{d+q-1}{q-1} \ \le \ p \ \le \  {r}^\pr_d - (\deg X) (d-q-b) \binom{d+n-q-1}{n-q-1}.
\]
\normalsize
\vskip 4pt
\item[$($ii$)$.] When $q = n$, one has  $K_{p,n}(X, B; L_d) \ne 0$ when
\[ (\deg X)(n+b+1) \binom{d+n-1}{n-1} \ \le \ p \ \le \ r_d^\pr  - \deg X. \]
\vskip 4pt
\item[$($iii$)$.]  When $q = 0$ one has $K_{p,0}(X, B;L_d) \ne 0$ when
\[  0 \ \le \ p \ \le \ r_d^\pr - (d-b)\binom{n-1+d}{n-1}. \]
\end{enumerate}
\end{theorem}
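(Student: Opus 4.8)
The plan is to mimic the proof of Theorem~\ref{PPn all b} essentially verbatim: reduce to an explicit Artinian algebra by modding out a regular sequence, and then write down monomial Koszul classes by inspection. Since $X$ is arithmetically Cohen--Macaulay, $R$ is Cohen--Macaulay of dimension $n+1$, so after a harmless finite extension of the ground field (Koszul cohomology commutes with field extension) we may choose a linear regular sequence $\theta_0,\dots,\theta_n\in R_1$. Then $\theta_0^d,\dots,\theta_n^d$ is a regular sequence on $R$, and a linear regular sequence inside $\Sym(R_d)$, so exactly as in Section~1 we get $K_{p,q}(X,B;L_d)\cong K_{p,q}^{\Sym(\ol R_d)}(\ol R(b)^{(d)})$, where $\ol R:=R/(\theta_0^d,\dots,\theta_n^d)$ and the group is the homology of the complex \eqref{Monomial.Complex} with $\ol S$ replaced by $\ol R$. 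The essential new feature is the structure of $\ol R$: arithmetic Cohen--Macaulayness makes $R$ a free module over $A:=\bfkk[\theta_0,\dots,\theta_n]$, say $R=\bigoplus_{j=1}^{\deg X}Au_j$ with $u_1=1$ and $\deg u_j=e_j$, whence $\ol R=\bigoplus_{j=1}^{\deg X}\ol A(-e_j)$ as modules over the monomial ring $\ol A:=\bfkk[\theta_0,\dots,\theta_n]/(\theta_0^d,\dots,\theta_n^d)$ of Section~1. Moreover $\ol R$ is a graded algebra with distinguished $\bfkk$-basis the ``quasi-monomials'' $\theta^\alpha u_j$ (no $\theta_i$-exponent $\ge d$), and a local-duality computation (using $H^n(X,\OO_X(k-n))\cong[H^{n+1}_{\frmm}R]_{k-n}$) gives $e_j\le c(X)$ for all $j$, in fact $\max_j e_j=\reg R=c(X)$.

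The next step is to extend Lemma~\ref{lem:cycles and boundaries} and Corollary~\ref{Kpq.nonzero.corollary} to $\ol R$. Here a genuine adjustment is required, since in $\ol R$ the product of two quasi-monomials is only a combination of quasi-monomials. I would fix a nonzero quasi-monomial $f\in\ol R_{qd+b}$, let $Z_f\subseteq\ol R_d$ be the quasi-monomials that annihilate $f$, and redefine $D_f\subseteq\ol R_d$ to be the quasi-monomials $m$ such that $f$ occurs with nonzero coefficient in $m\cdot g$ for \emph{some} quasi-monomial $g\in\ol R_{(q-1)d+b}$. If $D_f\subseteq Z_f$, then for any wedge $\zeta$ of distinct elements of $Z_f\setminus D_f$ the element $(\det D_f\wedge\zeta)\otimes f$ is a cycle (every factor kills $f$), and the boundary argument of Lemma~\ref{lem:cycles and boundaries}(ii) still runs once phrased in the quasi-monomial basis: a term of $\partial(\sum\xi_j\otimes g_j)$ hitting this basis element would force some quasi-monomial $m$, not a factor of $\det D_f\wedge\zeta$, with $f$ occurring in $m\cdot g$ for a quasi-monomial $g$ — that is, $m\in D_f$, a contradiction. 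This yields $K_{p,q}(X,B;L_d)\ne0$ for all $|D_f|\le p\le|Z_f|$, just as in Corollary~\ref{Kpq.nonzero.corollary}.

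It then remains to choose $f$ and count. For (i), take $f=\theta_0^{d-1}\cdots\theta_{m-1}^{d-1}\theta_m^{r}$, where $m,r$ are the quotient and remainder of $qd+b$ by $d-1$ (this exists since $qd+b\le nd-1<(n+1)(d-1)$, and $q\le n-1$ enters only the upper bound). The hypothesis $d\ge b+q+c+1$ together with $e_j\le c$ forces every quasi-monomial of $D_f$ to have the form $\theta^\mu u_j$ with $\mu\le a$, $|\mu|=d-e_j$, and $\mu$ not supported only on the $m$-th coordinate; this gives $D_f\subseteq Z_f$, and summing the degree-$d$ divisor counts from the proof of Theorem~\ref{PPn all b} over the $\deg X$ summands $\ol A(-e_j)$ yields $|D_f|\le\sum_j\big(\binom{d-e_j+q}{q}-\binom{d-e_j-b-1}{q}\big)\le(\deg X)(q+b+1)\binom{d+q-1}{q-1}$. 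Dually, counting the degree-$d$ quasi-monomials that do \emph{not} annihilate $f$ summand-by-summand and using $\dim\ol R_d=r_d-(n+1)\ge r_d^\pr$, a telescoping estimate (this is where $n-q\ge 1$ is used) gives $|Z_f|\ge r_d^\pr-(\deg X)(d-q-b)\binom{d+n-q-1}{n-q-1}$. The extremal weights (ii) $q=n$ and (iii) $q=0$ are handled by the same device with one term of \eqref{Monomial.Complex} absent ($D_f=\emptyset$ when $q=0$; the right-hand term vanishes in the relevant range when $q=n$), so that the bookkeeping only simplifies and produces the slightly different stated bounds.

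I expect the main obstacle to be the second paragraph: proving the quasi-monomial cycle is not a coboundary now that $\ol R$ is not a monomial ring. The danger is that an occurrence of $f$ inside a product $m\cdot g$ of quasi-monomials could arise from the structure constants $u_lu_{l'}=\sum_k c^{(k)}_{ll'}(\theta)u_k$ rather than from honest divisibility, in which case $m$ need not ``divide'' $f$. The inequality $d\ge b+q+c+1$ is exactly what controls this: since every $e_j\le c$, any relevant structure-constant monomial $\theta^\nu$ satisfies $|\nu|=e_l+e_{l'}\le 2c$, and the degree identity $\mu+\gamma+\nu=a$ then forces $\mu\le a$ with $|\mu|=d-e_l$ — so $m=\theta^\mu u_l$ really does lie in $D_f$, and the contradiction goes through. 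Once this point is nailed down, the rest is the combinatorial counting above, a routine extension of the computation in the proof of Theorem~\ref{PPn all b}.
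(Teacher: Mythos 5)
Your proposal follows essentially the same approach as the paper: pass to a Noether normalization $S\subseteq R$, mod out by the $d$-th powers of a linear regular sequence to get a free $\ol S$-module $\ol R=\bigoplus_{y^\beta\in\Lambda}\ol S\,y^\beta$ of rank $\deg X$ with basis degrees bounded by $c(X)$, and then exhibit monomial Koszul cycles $(\det E_f\wedge\zeta)\otimes f$ for the same ``leftmost'' $f\in\ol S_{qd+b}$. The counts are identical.

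One small remark on the ``main obstacle'' you flag. You worry that $f$ might appear in a product $m\cdot g$ of quasi-monomials via the structure constants of $R$ as an $\ol S$-algebra, so that $m$ ``need not divide $f$,'' and you propose to control this with the hypothesis $d\ge b+q+c+1$. That hypothesis in fact plays no role here: the point (made in the paper's Lemma~\ref{CM.Non-Van.Cycle.Lemma}) is simply that $f$ is a \emph{pure} $\ol S$-monomial, so it can only appear in the $\Lambda$-coordinate of $1$ of the expansion $m\cdot g=x^{\alpha+\gamma}\bigl(h_0\cdot 1+\sum_\lambda h_\lambda y^\lambda\bigr)$, whence $x^{\alpha+\gamma}$ and in particular $x^\alpha$ divides $f$, no matter what the structure constants are. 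Your degree identity $\mu+\gamma+\nu=a$ encodes the same fact, but it holds unconditionally; the inequality $d\ge b+q+c+1$ is used only to guarantee $E_f\subseteq Z_f$ for the chosen $f$, exactly as in the paper. Also, your $D_f$ (elements $m$ with $f$ occurring in some $m\cdot g$) is \emph{a priori} a subset of the paper's $E_f$ (elements $x^\alpha y^\beta$ with $x^\alpha\mid f$), so the lemma and its proof still run; but since you bound $|D_f|$ by the same estimate as $|E_f|$, you recover identical numerics. Apart from this over-cautious detour, the argument is correct and matches the paper.
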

 \noi A somewhat more complicated but sharper statement appears in Remark \ref{rmk:improved acm bound} below.

 \begin{remark}
If we fix $b$ and $q$, we can interpret these bounds as asymptotic statements as $d\to \infty$.  Under these assumptions, we are saying that $K_{p,q}(X,B; L_d) \ne 0$ for all $p$ in the range
 \[
 \frac{\deg(X)(q+b+1)}{(q-1)!}d^{q-1} + O(d^{q-2})\  \leq \ p \  \leq \ r'_d -\left(  \frac{\deg(X)}{(n-q-1)!}d^{n-q} + O(d^{n-q-1})\right) 
 \]
Conjecture~7.1 in \cite{ASAV} states that one should have $K_{p,q} = 0$ for $p \le O(d^{q-1})$; it would be interesting to understand the optimal leading coefficients as well.  In the ACM case this implies that $K_{p,q} = 0$ also for $p > r_d - O(d^{n-q})$, but in the non-ACM case the groups in question can be nonvanishing for $p \approx r_d$ \cite[Remark 5.3]{ASAV}. 
\end{remark}

For the proofs of the theorem, let $I_X\subseteq \bfkk[x_0,\dots,x_m]$ be the defining ideal of $X$, so that $R=\bfkk[x_0,\dots,x_m]/I_X$.  The statement is independent of the ground field, so we may assume that $\bfkk$ is  infinite.  Then, after a general change of coordinates, we may assume that $x_0, \dots, x_n$ form a system of parameters for $R$.  To help clarify  the following arguments, we will relabel the variables $x_{n+1},\dots,x_m$ as $y_{n+1},\dots,y_m$.

Let $S=\bfkk[x_0,\dots,x_n]\subseteq R$, which is a Noether normalization since  $x_0, \dots, x_n$ is a system of parameters.  As $R$ is Cohen-Macaulay of dimension $n+1$, it follows that it is a maximal Cohen-Macaulay $S$-module, and hence a free $S$-module.  We may choose a set $\Lambda$  of monomials of the form $y^\beta\in R$ which form a basis for $R$ as an $S$-module, so that
\[
R \ = \ \bigoplus_{y^\beta \in \Lambda} \, S \cdot y^\beta.
\] We assume that $1\in \Lambda$. Thus $\deg(X) = \# \Lambda$ and we observe that $ c(X) = \max \{ \deg y^\beta  \}$.

Fix $q \in[0,n]$, $d > 0$  and $b\ge 0$. Set 
\[ \overline{R} \ = \ R/(x_0^d,\dots,x_n^d), \] and define $\overline{S}$ as in the previous section. Thus $\overline{R} = R \otimes_S \overline{S}$, and $\ol{R}$ is  a free $\overline{S}$-module with basis $\Lambda$.
 Since $R$ is Cohen-Macaulay, we have 
\[\dim  K_{p,q}(R(b)^{(d)})\ =\ \dim K_{p,q}(\overline{R}(b)^{(d)})\] for all $p$ and $q$, where the group on the right is computed as the cohomology of the complex
\begin{equation}\label{Coh.Mac.Mon.Complex}
 \xymatrix{
 \bigwedge^{p+1} \overline{R}_d \otimes  \overline{R}_{(q-1)d+b}\ar[r]^-{\partial}& \bigwedge^{p}  \overline{R}_d\otimes  \overline{R}_{qd+b}\ar[r]^-{\partial}& \bigwedge^{p-1}  \overline{R}_d\otimes  \overline{R}_{(q+1)d+b}.
 }
\end{equation}
In the natural way, we can speak of monomials in $\ol{R}$: these are (the images in $\ol{R}$ of) elements of the form $x^\alpha y^\beta$ where $y^\beta \in \Lambda$, and the degree of such a monomial is $|\alpha| + |\beta|$. Given a monomial $f \in \ol{S}$, we denote by 
\[   Z_f \  , \    E_f \ \subseteq \ \ol{R}_d \]
respectively the set of degree $d$ monomials that annihilate $f$ and the collection of degree $d$ monomials of the form $x^\alpha y^\beta$ where $x^\alpha$ divides $f$ and $y^\beta \in \Lambda$. 

We start with an analogue of Lemma \ref{lem:cycles and boundaries}. 
\begin{lemma} \label{CM.Non-Van.Cycle.Lemma} Let \[ f \in \ol{S}_{qd + b} \ \subseteq  \ \ol{R}_{qd + b}\] be a monomial such that $E_f \subseteq Z_f$. Then any non-zero monomial element
\[   m = \big( \det E_f \wedge \zeta \big) \otimes f \] with $\zeta \in \bigwedge Z_f $
represents a non-zero Koszul cohomology class. In particular
\[ K_{p,q}\big (X, \OO_X(b) ; \OO_X(d) \big) \ \ne \ 0 \]
for every $p$ with
\[   | E_f | \ \le \ p \ \le \ |Z_f|. \]
\end{lemma}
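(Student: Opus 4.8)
The plan is to mimic the proof of Lemma~\ref{lem:cycles and boundaries} essentially verbatim, now working in the free $\ol{S}$-module $\ol{R}$ with its monomial basis. The key structural facts I would use are: (a) $\ol{R} = \bigoplus_{y^\beta\in\Lambda} \ol{S}\cdot y^\beta$ is free over $\ol{S}$, so a monomial $x^\alpha y^\beta \in \ol{R}$ is nonzero precisely when $x^\alpha \ne 0$ in $\ol{S}$, and a product $(x^\alpha y^\beta)\cdot(x^{\alpha'}y^{\beta'})$ can only contribute the monomial $x^{\alpha+\alpha'} y^\gamma$ coming from $y^\beta y^{\beta'}$; (b) since $f \in \ol{S}$, multiplication by $f$ acts on each free summand $\ol{S}\cdot y^\beta$ separately. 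The differential $\partial$ in \eqref{Coh.Mac.Mon.Complex} sends $\xi\otimes g = (n_0\wedge\cdots\wedge n_k)\otimes g$ to $\sum_i \pm (n_0\wedge\cdots\widehat{n_i}\cdots\wedge n_k)\otimes n_i g$, exactly as before.

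First I would check that $\big(\det E_f\wedge\zeta\big)\otimes f \in \ker\partial$. By hypothesis $E_f\subseteq Z_f$ and $\zeta\in\bigwedge Z_f$, so every wedge factor annihilates $f$; applying $\partial$ produces terms $(\text{wedge})\otimes(n_i f)$ with $n_i f = 0$, hence the class is a cycle. (In fact this only needs each factor to kill $f$, which is precisely $E_f\cup\operatorname{supp}\zeta\subseteq Z_f$.)

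Next, the nonvanishing of the cohomology class. Suppose
\[
\partial\Big(\sum_j \xi_j\otimes g_j\Big) \ = \ \big(\det E_f\wedge\zeta\big)\otimes f,
\]
where the $\xi_j\otimes g_j$ may be taken to be monomials. Some monomial term on the left must equal $\big(\det E_f\wedge\zeta\big)\otimes f$; such a term has the shape $\pm\big(m\wedge \det E_f\wedge\zeta\big)\otimes g$ with $m\in\ol{R}_d$ a wedge factor of some $\xi_j$, and $m g = f$ in $\ol{R}$. Here is the one new point over the $\PP^n$ case: since $f\in\ol{S}$ lies in the summand $\ol{S}\cdot 1$ and $\ol{R}$ is free over $\ol{S}$, writing $m = x^\alpha y^\beta$ and $g = x^{\alpha'}y^{\beta'}$, the equality $mg=f$ forces $y^\beta y^{\beta'} \in \ol{S}\cdot 1$, i.e.\ a nonzero $\ol{S}$-multiple of the basis element $1\in\Lambda$; since $\Lambda$ is a basis and $1\in\Lambda$, in fact $y^\beta y^{\beta'}$ must be (a scalar times) $1$, and because the $y^\beta$ are honest monomials with nonnegative exponents this gives $\beta=\beta'=0$. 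Thus $m = x^\alpha \in \ol{S}$ with $x^\alpha \mid f$, so $m\in E_f$, whence $m\wedge\det E_f = 0$ and the term vanishes — a contradiction. Therefore $\big(\det E_f\wedge\zeta\big)\otimes f$ is not a boundary, and since it is a cycle it represents a nonzero class in $K_{p,q}(X,\OO_X(b);\OO_X(d))$ whenever it is nonzero as an element of $\bigwedge^p\ol{R}_d\otimes\ol{R}_{qd+b}$. Taking $\zeta$ to range over all subsets of $Z_f\setminus E_f$ (so that $\det E_f\wedge\zeta$ is a nonzero monomial) gives such classes for every $p$ with $|E_f|\le p\le|Z_f|$, proving the ``in particular'' clause.

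The only real obstacle — and it is mild — is bookkeeping: confirming that the free-module structure genuinely forces $\beta=\beta'=0$ in $mg=f$ (the substitute for the ``$n_0$ is again a divisor of $f$'' step), and that $\det E_f$ is a legitimate element, i.e.\ that the elements of $E_f$ are $\ol{S}$-linearly independent degree-$d$ monomials of $\ol{R}$ so the wedge $\det E_f$ is nonzero and the whole class can be nonzero for suitable $\zeta$. Both are immediate from freeness of $\ol{R}$ over $\ol{S}$ and from the definition of $E_f$ as $\{x^\alpha y^\beta : x^\alpha\mid f,\ y^\beta\in\Lambda\}$, which is visibly a set of distinct monomials. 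Everything else is a transcription of the argument already given for Lemma~\ref{lem:cycles and boundaries}.
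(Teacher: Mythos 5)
Your overall strategy --- adapt Lemma~\ref{lem:cycles and boundaries} to the free $\ol{S}$-module $\ol{R}$ with monomial basis $\Lambda$, verify the cycle condition via $E_f\subseteq Z_f$, and show non-boundaryness by checking the monomial cannot appear in the expansion of any $\partial(\xi\otimes g)$ --- matches the paper's. The cycle verification is fine. But the non-boundary step has a genuine error: you claim that the relation $mg=f$, with $m=x^\alpha y^\beta$, $g=x^{\alpha'}y^{\beta'}$ and $y^\beta,y^{\beta'}\in\Lambda$, forces $\beta=\beta'=0$. This is false, for two reasons. First, $mg$ need not equal $f$: one only knows that $f$ appears as a \emph{term} when $mg$ is expanded in the monomial basis of $\ol{R}$, since $y^\beta y^{\beta'}$ is typically a nontrivial $\ol{S}$-linear combination $h_0\cdot 1 + \sum_{\lambda\neq 0}h_\lambda y^\lambda$ of basis monomials, not a monomial of $\ol{R}$. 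Second, and more fatally, even if $y^\beta y^{\beta'}\in\ol{S}\cdot 1$ this would not force it to be a \emph{scalar} multiple of $1$: the coefficient $h_0$ can be any element of $\ol{S}_{|\beta|+|\beta'|}$. A concrete counterexample: for $R=\bfkk[x_0,y_1]/(y_1^2-x_0^2)$ with $\Lambda=\{1,y_1\}$, one has $y_1\cdot y_1 = x_0^2\cdot 1$, which lies in $\ol{S}\cdot 1$ with both $\beta$ and $\beta'$ nonzero.

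Fortunately the intermediate claim $\beta=\beta'=0$ is not needed, and the correct argument is shorter. Write $y^\beta y^{\beta'} = h_0\cdot 1 + \sum_{\lambda\neq 0}h_\lambda y^\lambda$ with $h_\lambda\in\ol{S}$. Since $f$ lies in the summand $\ol{S}\cdot 1$ and $\ol{R}$ is free over $\ol{S}$ on $\Lambda$, the monomial $f$ in the expansion of $mg = x^{\alpha+\alpha'}\,y^\beta y^{\beta'}$ must come from the component $x^{\alpha+\alpha'}h_0$; thus some monomial of $x^{\alpha+\alpha'}h_0$ equals $f$, and in particular $x^\alpha \mid f$. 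By the very definition of $E_f$ --- which permits an arbitrary $y^\beta\in\Lambda$ provided $x^\alpha$ divides $f$ --- this already gives $m=x^\alpha y^\beta\in E_f$, so $m\wedge\det E_f=0$ and the contradiction you want follows. This is exactly the paper's argument.
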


\begin{proof}
Since $E_f \subseteq Z_f$, $m$ is evidently a Koszul cycle. It remains to prove that it is not cohomologous to zero. In fact, we'll show that $m$ cannot occur as a monomial appearing in the expansion (with respect to the chosen basis of $\ol{R}$) of $\partial(\xi \otimes g)$ for any monomials $\xi \in \Lambda^{p+1} \ol{R}_d$ and $g \in \ol{R}_{(q-1)d + b}$. Suppose to the contrary that $m$ appears as a term in $\partial( \xi_0 \wedge \ldots \wedge \xi_p \otimes g)$. Then after possibly reindexing and introducing a sign, we can suppose
\[  \xi_1 \wedge \ldots \wedge \xi_p \ = \ \det(E_f)\wedge \zeta, \] and that  $f$ appears as a term in the expansion of $\xi_0 g$ in terms of the basis $\Lambda$. Suppose that
\[  \xi_0 \ = \ x^\alpha y^\beta \ \ , \ \ g = x^\gamma y^\delta \]
where $y^\beta, y^\delta \in \Lambda$. Then in $\ol{R}$ we can rewrite
\[  y^{\beta + \delta } \ = \ h_0 \cdot 1 \, + \, \sum_{y^\lambda \in \Lambda} \, h_{\lambda} \cdot y^\lambda \]
where $ h_{\lambda}  \in   \ol{S}_{|\beta| +|\delta| - |\lambda|}$. Therefore $f = x^{\alpha+\gamma}h_0$, and consequently $x^\alpha y^\beta \in E_f$. In particular $\xi_0$ also appears as one of $\xi_1, \ldots, \xi_p$, and hence $m = 0$.
\end{proof}

We now turn to the
\begin{proof}[Proof of Theorem \ref{First.CM.Theorem}]
As before, we take $f$ to be the be the leftmost nonzero monomial of $\overline{S}$ of degree $dq+b$:
\[    f \ = \ x_0^{d-1}\cdot x_{1}^{d-1}\cdot \ldots \cdot x_{q-1}^{d-1}\cdot x_q^{q+b}.\]
We claim first of all that $E_f \subseteq Z_f$ provided that $d \ge b +q + c + 1$. In fact, suppose that
\[  w \ = \ x_0^{a_0} \cdot \ldots \cdot x_q^{a_q}\cdot y^\beta \ \in \ E_f. \]
Then $a_q \le q+b$, and hence
\begin{align*} a_0 + \ldots + a_{q-1} \ &= \ d - a_q - |\beta| \\ &\ge \ d - (q+b) - c \\ &> 0. \end{align*}
Therefore at least one of $a_0, \ldots, a_{q-1}$ is strictly positive, and consequently $w \in Z_f$. 

In order to apply Lemma \ref{CM.Non-Van.Cycle.Lemma}, it remains to estimate the sizes of $E_f$ and $Z_f$. Writing $\ol{r}_d = \dim \ol{R}_d$, we start by giving an upper bound on $\ol{r}_d - | Z_f|$. Assume first that $q \in [1, n-1]$, and consider a monomial $x^{\alpha} = x_0^{a_0} \cdot \ldots \cdot x_n^{a_n}$.  Then 
a degree $d$ monomial $x^\alpha y^\beta$ lies in the complement of $Z_f$ if and only if
\[ a_0 \,  = \, \ldots \, = \, a_{q-1} \, = \, 0 \ \ , \ \ a_q \le d - b -q - 1. \]
The number of  possibilities for $x^\alpha$ is (rather wastefully) bounded above simply by the number 
of degree $d$ monomials in ${\bf k}[x_{q+1},\dots,x_{n}]$, times the number of choices for $a_q$, times the number of choices for $y^\beta$.
Since $|\Lambda| = \deg X $, this leads to the lower bound
\[    \ol{r}_d - (\deg X) (d-q-b) \binom{d + n - q -1}{n-q-1} \ \le \ |Z_f|.\] Turning to an upper bound on $|E_f|$, observe that $x^\alpha y^\beta \in E_f$ if and only if
\[   a_0 , \ldots, a_{q-1} \, \le \, d-1 \ \ , \ \ a_{q} \le q+b \ \ \text{ and } \ \ a_{q+1}=\dots=a_n=0 \]
We can bound this (again wastefully) by the number of monomials of degree $d$ in ${\bf k}[x_0,\dots,x_{q-1}]$, times the number of choices for $a_q$, times the number of choices for $y^\beta$.  This leads to:
\[  (\deg X) (q+b+1) \binom{q-1 + d}{q-1} \ \ge \ |E_f|. 
\]

\noi So to obtain assertion (i) of Theorem  \ref{First.CM.Theorem}, it remains only to observe that
\begin{align*}  \ol{r}_d \ &= \ \sum_{y^\beta \in \Lambda} \dim \ol{S}_{d - |\beta|}\\ &\ge \ \sum_{y^\beta \in \Lambda}  \left ( \dim S_{d - |\beta|} - (n+1)\right ) \\ &= \ \dim R_d \ - \ |\Lambda| (n+1) \\ &= \ r_d^\pr.
\end{align*}

When $q = n$ we get the same bound on $|E_f|$, but now we find that 
\[  \ol{r}_d \, - \, ( \deg X ) \ \le \ |Z_f|, \]
and this yields statement (ii) of the Theorem. Finally, when $q = 0$ we get the same lower bound on $|Z_f|$ as above, and we can obtain nonvanishing starting with $p=0$.
\end{proof}

\begin{remark}\label{rmk:improved acm bound} By separating the estimates into two terms depending on whether $\beta$ is equal to zero or not, one gets a slightly better upper bound on the size of $E_f$, when $q \in [1, n-1]$:
\[  (\deg X -1) (q+b+1) \binom{q-1 + d-1}{q-1} + \binom{q+d}{q}- \binom{d-b-1}{q}-q\ \ge \ |E_f|. 
\]
In particular, this reduces to the statements obtained for $\PP^n$ in the previous sections. 
\end{remark}

\begin{remark}\label{rmk:ACM big d} 
By defining $m$ and $r$ as the quotient and remainder of $dq+b$ by $d-1$, one can use an argument to the proof of Theorem~\ref{PPn all b} to extend this to some additional values of $q,d,$ and $b$.  However, we felt the asymptotic behavior was more clear when phrased in terms of $q$ and $b$ instead of $r$ and $m$.
\end{remark}

\vskip 10pt
\begin{remark}
The bounds for $|E_f|$ and $\ol{r}_d - |Z_f|$ appearing in the proof of Theorem \ref{First.CM.Theorem} 
could be improved by a more precise count of the relevant possibilities, in particular taking into account the degrees of the $y^\beta$.  This amounts to computations involving the numerator of the Hilbert series of $R$ (i.e. the Hilbert function of the Artinian reduction $\overline{R}$), and confronted with a specific example, it is often quite easy to use directly the method of the proof to get stronger statements. For example, let $X \subseteq \PP^5$ be the hypersurface 
\[ x_0^3 + \ldots + x_5^3 \ = \ 0. \]    Then $\Lambda = \{1,x_5,x_5^2\}$, so $c = 2$. We take $(q,b,d)=(3,0,8)$ and 
\[ f \ = \ x_0^{7}x_1^{7}x_2^{7}x_3^3. \]
Then $R={\bf k}[x_0,\dots,x_5]/(x_0^3 + \ldots + x_5^3)$ and $\overline{R}=R/(x_0^8,\dots,x_4^8)$.
The bounds from Theorem~\ref{First.CM.Theorem} and Remark 2.5 yield the nonvanishing result $K_{p,3}\big(X; \OO_X(8)\big) \ \ne  \ 0$ for $p$ between $540$ and $1005$.

However, if we follow the method of the proof, we can compute the size of $E_f$ directly.  Let $A:={\bf k}[x_0,\dots,x_{q}]/(x_0^d,\dots,x_{q-1}^d,x_q^{q+b+1})={\bf k}[x_0,\dots,x_3]/(x_0^8,x_1^8,x_2^8,x_3^4)$.  Then
\[
\sum_{y^\beta\in \Lambda} \dim A_{d-\deg y^\beta}=\dim A_8+\dim A_7+\dim A_6=301.
\]
A similar computation shows that there are $14$ monomials in the complement of $Z_f$ and so $|Z_f|=1030-14=1016$, and the nonvanishing statement can be extended to all values of $p$ between $301$ and $1016$.

\end{remark}

\begin{remark} \label{Adjoint.Type.Remark}
Let $X \subseteq \PP^m$ be an arbitrary variety of dimension $n$, and suppose that $B$ is a line or vector bundle on $X$ with the property that
\[ \HH{i}{X}{B\otimes \OO_X(k)}  \ = \ 0 
  \]
  for all $k \in \ZZ$ and $0 < i < n$: in other words, $M = \oplus \HH{0}{X}{B\otimes \OO_X(k)} $ is a Cohen-Macaulay module over the homogeneous coordinate ring of $\PP^m$. Replacing $B$ by a twist, one can assume without loss of generality that $M_{-1} =0$  but $M_0 \ne 0$. Then one can use the methods of this section to obtain effective nonvanishing statements for the syzygies $K_{p,q}(X, B; \OO_X(d))$. In fact, the hypotheses on  $M$ imply that it has a generator in degree $0$, and then in the arguments above one can replace $R$ by $M$. We leave details to the interested reader. It would be interesting to compare the resulting statements with the results \cite{Zhou1} of Zhou which fall under this rubric. 
\end{remark}

Finally, we expect that nonvanishing statements similar to Theorem~\ref{First.CM.Theorem} hold for any finitely generated, graded $\mathbf{k}$-algebra $R$.  More precisely, we conjecture the following analogue of part (i) of Theorem~\ref{First.CM.Theorem}.
\begin{conjecture} Fix $b$ and $R$ and $q\in [1,n]$ where $n:=\dim R-1$. Then there exist constants $c$ and $C$ such that if $d \gg 0 $ then
\[  K_{p,q}(R(b)^{(d)})\ \ne  \ 0 \ \ \text{for all} \ \ cd^{q-1} \, \leq \,  p\,  \leq r_d - Cd^{n-q}\] and for all $d\gg 0$.
\end{conjecture}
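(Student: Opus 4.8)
The plan is to mirror the structure of the proof of Theorem~\ref{First.CM.Theorem}, replacing the Noether normalization argument (which exploited Cohen--Macaulayness) by a weaker device that still produces enough monomial cycles. First I would reduce to the Artinian setting: choosing a general linear system of parameters $x_0,\dots,x_n$ for $R$, one still has that $x_0^d,\dots,x_n^d$ is a (possibly non-regular) sequence, and one passes to $\ol R = R/(x_0^d,\dots,x_n^d)$. Unlike the ACM case, $\ol R$ need not be a free $\ol S$-module, so the dimension identity between $K_{p,q}(R(b)^{(d)})$ and $K_{p,q}(\ol R(b)^{(d)})$ fails; instead one should keep track of the natural surjection or filtration relating the two and argue that the monomial classes one constructs survive. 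The cleanest route is probably to pick a $\bfkk$-basis of $R$ consisting of monomials $x^\alpha y^\beta$ compatible with a monomial order, use $R$ itself (not $\ol R$) together with the observation that $x_q^{N}$ kills many basis elements once $N$ is large, and carry out Lemma~\ref{lem:cycles and boundaries}-style bookkeeping directly in the Koszul complex of $\Sym R_d$ acting on $R(b)^{(d)}$.

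The combinatorial heart would then be to exhibit, for $d\gg 0$, a monomial $f\in R_{qd+b}$ (lifting the ``leftmost'' monomial $x_0^{d-1}\cdots x_{q-1}^{d-1}x_q^{q+b}$ used before) together with sets $Z_f$ of degree-$d$ monomials annihilating $f$ in a suitable quotient and $E_f$ of degree-$d$ monomials ``dividing'' $f$, satisfying $E_f\subseteq Z_f$, and to check the analogue of Lemma~\ref{CM.Non-Van.Cycle.Lemma}: that $(\det E_f\wedge \zeta)\otimes f$ is a non-boundary. The sizes $|E_f|$ and $\ol r_d-|Z_f|$ should again be $O(d^{q-1})$ and $O(d^{n-q})$ respectively, by the same wasteful counts (monomials in $\bfkk[x_0,\dots,x_{q-1}]$ of degree $d$ times a bounded factor, resp.\ monomials in $\bfkk[x_{q+1},\dots,x_n]$ of degree $d$ times a bounded factor), with the constants $c,C$ absorbing the contribution of the finitely many ``$y$-type'' basis monomials and of the relations among them. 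Growth estimates for $\dim R_d$ of the form $r_d = (e/n!)d^n + O(d^{n-1})$, valid for any $n$-dimensional graded algebra, then give the asserted window $cd^{q-1}\le p\le r_d - Cd^{n-q}$.

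The main obstacle — and the reason this is stated as a conjecture rather than a theorem — is precisely the failure of the freeness/flatness that made the ACM argument clean. Without it, two things can go wrong: the Koszul cohomology of $\ol R(b)^{(d)}$ need not compute that of $R(b)^{(d)}$ on the nose, so one must argue that the specific monomial class is detected by a map $K_{p,q}(R(b)^{(d)})\to K_{p,q}(\ol R(b)^{(d)})$ (or a lift in the other direction); and, more seriously, in the non-Cohen--Macaulay setting a monomial Koszul cocycle can be the boundary of a \emph{non-monomial} element — the same pathology flagged in the final remark of Section~1 — so the ``look at a single monomial term in $\partial(\xi\otimes g)$'' argument of Lemmas~\ref{lem:cycles and boundaries} and \ref{CM.Non-Van.Cycle.Lemma} is not obviously available. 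Overcoming this would likely require either a Gr\"obner-basis/initial-ideal degeneration of $R$ to a monomial (or at least more tractable) algebra, together with a semicontinuity argument for the ranks of the $K_{p,q}$, or a direct construction of nonzero classes that does not pass through the ``monomial boundary'' heuristic at all. I would expect the case $d\gg 0$ with $R$ reduced to be the most approachable, since there one has more freedom in choosing the degeneration.
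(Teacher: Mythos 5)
You have correctly recognized that this statement is labeled a \emph{conjecture} in the paper and that the authors give no proof of it; there is therefore nothing to compare your argument against, and your write-up is really a diagnosis of why the conjecture is plausible yet out of reach of the paper's methods. That diagnosis is accurate on the essential points: without the Cohen--Macaulay hypothesis the powers $x_0^d,\dots,x_n^d$ need not form a regular sequence on $R$, so the Artinian reduction no longer preserves the dimensions of the $K_{p,q}$; moreover $\ol R$ need not be a free $\ol S$-module, which is exactly what made the chosen monomial basis $\Lambda$ available and made ``$f$ appears as a term in the expansion of $\xi_0 g$'' a well-defined, checkable condition in the proof of Lemma~\ref{CM.Non-Van.Cycle.Lemma}. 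So you have put your finger on the right obstruction.

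One concrete caution about the remediation strategy you float. A Gr\"obner degeneration from $R$ to an initial algebra $\mathrm{in}(R)$ gives semicontinuity in the \emph{wrong} direction for this problem: the graded Betti numbers of the special fiber dominate those of the general fiber, so $K_{p,q}\big(\mathrm{in}(R)(b)^{(d)}\big)\neq 0$ does \emph{not} imply $K_{p,q}\big(R(b)^{(d)}\big)\neq 0$. To make a degeneration argument work you would need either equality of the relevant Koszul cohomology dimensions along the family (which is far from automatic), or a way to certify that a particular class in the special fiber lifts, rather than merely invoking semicontinuity of ranks. Your other suggested route---constructing classes directly in $R$ using a monomial $\bfkk$-basis and a monomial order, and arguing that the ``leftmost'' term of a putative bounding chain cannot cancel the chosen cocycle---is more promising in spirit, but as you note it runs into precisely the pathology the paper flags: a monomial cocycle can bound a non-monomial chain once one leaves the free-module setting, and the single-term bookkeeping of Lemma~\ref{lem:cycles and boundaries} no longer suffices. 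In short, you have not proved the conjecture, but neither does the paper, and your account of where the ACM proof breaks down and what would have to be repaired is faithful to the situation.
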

 \noi We expect similar analogues of parts (ii) and (iii) of Theorem~\ref{First.CM.Theorem}, as well as analogues of the cases where $b$ is close to $d$, as in Remark~\ref{rmk:ACM big d}.


\begin{thebibliography}{EMS}
 \setlength{\parskip}{3pt}
 
  \bibitem{BCR} Winfried Bruns, Aldo Conca and Tim R\"omer, Koszul homology and syzygies of Veronese subalgebras,  \textit{Math. Ann.} {\bf 351} (2011), 761--779.
 
 \bibitem{BCR2} Winfried Bruns, Aldo Conca and Tim R\"omer, Koszul cycles, in \textit{Combinatorial aspects of commutative algebra and algebraic geometry}, Abel Symposium, Springer 2011, 17--33.

 \bibitem{CKW} Aldo Conca, Martina Juhnke-Kubitzke and Wolkmar Welker, Asymptotic syzygies of Stanley-Reisner rings of iterated subdivisions, preprint.
 
 \bibitem{ARBT} Lawrence Ein, Daniel Erman and Robert Lazarsfeld, Asymptotics of random Betti tables, \textit{Journal f\"ur die reine und angew. Math.}, \textbf{702} (2015), 55--75. 

\bibitem{ASAV} Lawrence Ein and Robert Lazarsfeld, Asymptotic syzygies of algebraic varieties, \textit{Invent. Math.} \textbf{190} (2012), 603--646.

\bibitem{EGHP}
David Eisenbud, Mark Green, Klaus Hulek and Sorin Popescu, Restricting linear syzygies: algebra and geometry, \textit{Compos. Math.} \textbf{141} (2005), 1460 --1478.

\bibitem{JPW} T. J\'{o}zefiak, P. Pragacz, and J. Weyman, Resolution of determinantal varieties and tensor complexes associated with symmetric and antisymmetric matrices, \textit{Asterisque} \textbf{87-88} (1981), 109-189.

\bibitem{OP} Giorgio Ottaviani and Rafaella Paoletti, Syzygies of Veronese embeddings, \textit{Compos. Math.} {\bf 125} (2001), 31--37.

\bibitem{Raicu} Claudiu Raiciu, Representation stability for syzygies of line bundles on Segre-Veronese varieties, to appear. 

\bibitem{Rubei} 
Elena Rubei, A result on resolutions of Veronese embeddings, \textit{Ann. Univ. Ferrar Sez. VII} {\bf 50} (2004), 151--165.

\bibitem{Zhou1} Xin Zhou, Effective nonvanishing of asymptotic adjoint syzygies, \textit{Proc. AMS} {\bf 142} (2014), 2255--2264.


\bibitem{Zhou2} Xin Zhou, Asymptotic weights of syzygies of toric varieties, to appear. 


\end{thebibliography}
\end{document}